\newtheorem{theorem}{Theorem}[section]
\newtheorem{proposition}[theorem]{Proposition}
\newenvironment{multilinecol}[1]{\begin{minipage}{#1}\mbox{}\vspace*{-0.3em}\\}{\\
\vspace*{-0.7em}
\end{minipage}}
\def\mm{{\mathfrak m}}
\begin{document}


\title{The far-flung Gorenstein numerical semigroup rings of type 4}
\author{Teodor I. Grigorescu}
\subjclass{13H10, 20M10, 20M25}
\keywords{canonical module, numerical semigroup, numerical semigroup ring, far-flung Gorenstein numerical semigroup ring, Cohen-Macaulay type}
\address{Teodor I. Grigorescu, Faculty of Mathematics and Computer Science, University of Bucharest, Str. Academiei 14, Bucharest 010014, Romania}
\email{teodor$\_999$@yahoo.com}

\maketitle

\begin{abstract}

We classify the far-flung Gorenstein numerical semigroup rings of\\ type 4.

\end{abstract}

\section*{Introduction}

The far-flung Gorenstein property has been introduced in \cite{f} in an attempt to understand the space of Cohen-Macaulay rings around the Gorenstein property. 

Let $(R,\mm)$ be a Cohen-Macaulay local ring with a canonical module $\omega_R$. In recent years, properties of rings have been investigated in relation to the trace ideal of $\omega_R$ (\cite{a}, \cite{c}, \cite{d}, \cite{e}, \cite{Herzog-Kumashiro}, \cite{f}, \cite{g}). For a ring $R$ and for an $R$-module $M$, the trace ideal of $M$ is
$$\mbox{tr}(M):=\sum_{\varphi\in\mbox{Hom}_R(M,R)}\mbox{Im}(\varphi).$$
It is known that, for any $P\in \mbox{Spec}(R)$, $R_{P}$ is not Gorenstein if and only if $\mbox{tr}(\omega_R)\subseteq P$ (\cite[Lemma 2.1]{d}). In particular, $R$ is a Gorenstein ring precisely when $\mbox{tr}(\omega_R)=R$. Therefore, when $\mbox{tr}(\omega_R)$ is large, the ring $R$ is in some sense close to being Gorenstein, and the other way around. For instance, in \cite[Definition 2.2]{d}, the ring $R$ is called {\it nearly Gorenstein} if $\mbox{tr}(\omega_R)\supseteq \mm$.

We further assume $R$ is one-dimensional. It is proved in \cite[Proposition A.1]{e} that, if $R$ is a domain, then $\mbox{tr}(\omega_R)\supseteq R:\overline{R}$, where $\overline{R}$ denotes the integral closure of $R$ and the colon is taken in the total ring of fractions $Q(R)$. Motivated by this, in \cite{f}, Herzog, Kumashiro and Stamate study the following class of rings:
\vspace{0.3cm}

\noindent{\bf Definition.} (\cite{f}) Let $R$ be a one-dimensional Cohen-Macaulay local ring with a canonical module $\omega_R$. The ring $R$ is called {\it far-flung Gorenstein} if $\mbox{tr}(\omega_R)=R:\overline{R}$.
\vspace{0.3cm}

In \cite[pp. 628]{f}, the authors give some technical conditions on $R$ under which they obtain some nice properties of the class of far-flung Gorenstein rings. One condition is the existence of an $R$-module $C\simeq \omega_R$ with
\begin{equation}
\label{conditie}
R\subseteq C\subseteq \overline{R}.
\end{equation}
This holds, for example, when $R$ is a domain. Then the ring $R$ is far-flung Gorenstein if and only if $C^2=\overline{R}$, see \cite[Theorem 2.5]{f}. One large class of rings which fulfill the technical conditions in \cite[pp. 628]{f} are the numerical semigroup rings. A numerical semigroup $H$ is a submonoid of $(\mathbb N,+)$ such that $\mathbb N\setminus H$ is finite. If $K$ is any field, the numerical semigroup ring $K[[H]]$ is defined as the subring $K[[H]]:=K[[t^h : h\in H]]$ of the power series ring $K[[t]]$. 

The (Cohen-Macaulay) type is an important invariant of a Cohen-Macaulay local ring, e.g. the Gorenstein rings are those whose type equals 1. We refer to \cite{Bruns-Herzog} for more background on this topic.

For far-flung Gorenstein rings, in \cite{f}, the authors provide bounds for the multiplicity of the ring in terms of its type. Thus it is proved in \cite[Example 2.9]{f} that $K[[H]]$ is a far-flung Gorenstein ring of (Cohen-Macaulay) type 2 if and only if $H=\langle 3,3m+1,3m+2\rangle$, with $m\geq 1$. Then, in \cite{f}, the authors characterize when $K[[H]]$ is far-flung Gorenstein in terms of the combinatorics of $H$, see Theorem \ref{caract}. This allows them to classify the far-flung Gorenstein numerical semigroup rings of type 3 which are not of minimal multiplicity, see \cite[Theorem 6.4]{f}.

The purpose of this note is to describe the far-flung Gorenstein numerical semigroup rings of type 4, see Theorem \ref{clasificare}. We obtain 40 disjoint such families in the classification, as presented in Table 1.

In Section \ref{prelim}, we recall how algebraic properties of the ring $K[[H]]$, especially the far-flung Gorenstein property, can be translated to properties of the numerical semigroup $H$. The key observation is that the generators of $C$ can be obtained from the pseudo-Frobenius numbers of $H$. Then we complete the classification of the far-flung Gorenstein numerical semigroup rings of type 3 obtained in \cite{f}, see Theorem \ref{completare}.

In Section \ref{type4}, we classify the far-flung Gorenstein numerical semigroup rings of type 4 starting from the inequality $5\leq e(H)\leq 9$ (see Proposition \ref{margini}), where $e(H)$ is the multiplicity of $H$, i.e. the smallest positive element in $H$. For each value in this range, in view of Theorem \ref{caract}, there are several possibilities for the canonical module $C$. Knowledge of $C$ gives information about the relative distance among the pseudo-Frobenius numbers. This results in many sets of restrictions that need to be studied. Some of these sets of restrictions do not produce far-flung Gorenstein numerical semigroup rings, see, for example, the subcase corresponding to equation \eqref{eq8} in the proof of Theorem \ref{clasificare}. For each possible value of $e(H)$, we give details for one relevant subcase, as the technique is similar for the rest. As noted in Table 1, the 40 families depend on one or two parameters and the embedding dimension is 4, 5, 6 or 7. The generators listed in Table 1 are minimal except for the values of the parameters given in Table 2.

\section{Preliminaries}
\label{prelim}

A numerical semigroup $H$ is a submonoid of ($\mathbb N$,+) such that $\mathbb N\setminus H$ is finite.

The {\it multiplicity} of a numerical semigroup $H$ is defined as $e(H):=\min~(H\setminus \{0\})$. The {\it embedding dimension} of $H$, denoted $v(H)$, is the cardinality of the unique minimal generating set of $H$. The {\it Frobenius number} of $H$ is $F(H):=\max~(\mathbb Z\setminus H)$. The set of the {\it pseudo-Frobenius numbers} of $H$ is 
$$PF(H):=\{x\in \mathbb Z\setminus H : x+h\in H, ~\forall ~h\in H\setminus \{0\}\}$$
and its cardinality is called the {\it type} of $H$, denoted $r(H)$. Note that $F(H)=\max~PF(H)$.

It always holds that $e(H)\geq v(H)$. The numerical semigroup $H$ is said to be {\it of minimal multiplicity} if $e(H)=v(H)$. In this case, the pseudo-Frobenius numbers are computed as follows.

\begin{proposition}
\label{PF}

\cite[Proposition 2.20 and Proposition 3.1]{i} Let $H$ be a numerical semigroup minimally generated by $a_1<a_2<...<a_v$ which is of minimal multiplicity. Then $PF(H)=\{a_2-a_1,a_3-a_1,...,a_v-a_1\}$.

\end{proposition}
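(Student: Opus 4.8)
The plan is to translate the statement into the combinatorics of $H$ relative to its multiplicity and to exploit the rigidity forced by minimal multiplicity, setting aside only the trivial case $H=\mathbb N$ (where $v=1$ and both sides are read as empty), so assume $v\ge 2$. Since $H$ is of minimal multiplicity we have $a_1=e(H)=v(H)=v$, and I would use the property in the form: for all $x,y\in H\setminus\{0\}$ one has $x+y-a_1\in H$, which is the standard characterization of numerical semigroups of maximal embedding dimension. With this in hand, the easy inclusion ``$\supseteq$'' is immediate: for $2\le i\le v$ the integer $a_i-a_1$ is positive and cannot lie in $H$, for otherwise $a_i=a_1+(a_i-a_1)$ would be a sum of two nonzero elements of $H$, contradicting that $a_i$ is a minimal generator; and for every $h\in H\setminus\{0\}$ we get $(a_i-a_1)+h=a_i+h-a_1\in H$ by the displayed property, since $a_i,h\in H\setminus\{0\}$. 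Hence $a_i-a_1\in PF(H)$, and the $v-1$ numbers so obtained are pairwise distinct.

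For the reverse inclusion I would take $x\in PF(H)$ and analyse a representation $x+a_1=\sum_{j=1}^{v}c_j a_j$ with $c_j\ge 0$, which exists because $x+a_1\in H$ (as $a_1\in H\setminus\{0\}$). If $c_1\ge 1$, then $x=(c_1-1)a_1+\sum_{j\ge 2}c_j a_j\in H$, contradicting $x\notin H$; so $c_1=0$. If $x+a_1=0$, then $x=-a_1$, but then $x+a_2=a_2-a_1\in H$ would make $a_2$ decompose, a contradiction; so $x+a_1>0$ and $\sum_{j\ge 2}c_j\ge 1$. Finally, if $\sum_{j\ge 2}c_j\ge 2$, pick $i\ge 2$ with $c_i\ge 1$ and set $m:=\sum_{j\ge 2}c_j a_j-a_i$, a nonzero element of $H$; then $x=a_i+m-a_1\in H$ by the displayed property, again contradicting $x\notin H$. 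Therefore $\sum_{j\ge 2}c_j=1$, i.e. $x+a_1=a_i$ for some $i\in\{2,\dots,v\}$, so $x=a_i-a_1$, completing the proof.

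I do not expect a genuine obstacle here; the argument is a short case analysis, and the one point requiring care is ruling out the spurious possibilities $c_1\ge 1$ and $\sum_{j\ge2}c_j\ge 2$ in a representation of $x+a_1$, which is precisely where minimality of the generators and the identity $x+y-a_1\in H$ are used. As an alternative route for the reverse inclusion, one can argue through the Apéry set: a counting argument (it has exactly $a_1=v$ elements, while $0,a_2,\dots,a_v$ are $v$ distinct members of it, each $a_i$ lying in it since $a_i-a_1\notin H$ as seen above) shows $\mathrm{Ap}(H,a_1)=\{0,a_2,\dots,a_v\}$; no $a_i$ is $\le_H$-below another $a_j$ (else $a_j$ would decompose), so every $a_i$ with $i\ge 2$ is a $\le_H$-maximal element of $\mathrm{Ap}(H,a_1)$, and combining this with the description $PF(H)=\{w-a_1: w \text{ maximal in } \mathrm{Ap}(H,a_1)\}$ yields the claim.
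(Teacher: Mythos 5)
This proposition is quoted in the paper from Rosales--Garc\'{i}a-S\'{a}nchez without proof, so there is no in-paper argument to compare against; judged on its own, your proof is correct. Both inclusions check out: for ``$\supseteq$'' you correctly observe that $a_i-a_1\notin H$ by minimality of the generator $a_i$ and that $(a_i-a_1)+h\in H$ follows from the closure property $x+y-a_1\in H$ of maximal embedding dimension semigroups; for ``$\subseteq$'' the case analysis on a factorization of $x+a_1$ is airtight (in particular, when $\sum_{j\ge 2}c_j\ge 2$ the element $m=x+a_1-a_i$ is indeed a nonzero element of $H$, so $x=a_i+m-a_1\in H$ gives the needed contradiction). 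Two remarks. First, you lean on the characterization ``$x+y-a_1\in H$ for all $x,y\in H\setminus\{0\}$'' as a black box; its proof is exactly the Ap\'{e}ry-set counting you sketch at the end ($|\mathrm{Ap}(H,a_1)|=a_1=v$ forces $\mathrm{Ap}(H,a_1)=\{0,a_2,\dots,a_v\}$, and a sum of two nonzero elements cannot be a minimal generator), so for a fully self-contained write-up you would fold that in. Second, your alternative route via $PF(H)=\{w-a_1: w\ \text{maximal in}\ \mathrm{Ap}(H,a_1)\}$ is essentially the proof implicit in the two cited propositions of the reference, whereas your main argument avoids Ap\'{e}ry sets in the reverse inclusion by working directly with factorizations; either is acceptable, and your handling of the edge case $H=\mathbb N$ (where the statement degenerates) is appropriately flagged.
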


We refer to \cite{i} for more background on numerical semigroups.
\\

Let $K$ be any field. The semigroup ring associated to $H$ is $K[[H]]:=K[[t^h : h\in H]]$, a subring of the power series ring $K[[t]]$. Then $\overline{K[[H]]}=K[[t]]$.

\begin{proposition}
\label{C}

(\cite[Exercise 21.11]{b}) Let $K[[H]]$ be a numerical semigroup ring. Then $C=(t^{F(H)-\alpha}~|~\alpha\in PF(H))$ is a canonical module of $K[[H]]$ which statisfies \eqref{conditie}.

\end{proposition}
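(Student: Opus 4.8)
The plan is to translate everything into the combinatorics of $H$ and then read off the canonical module. Work inside the fraction field $Q(K[[H]])=K((t))$. To a subset $E\subseteq\mathbb Z$ that is bounded below and satisfies $E+H\subseteq E$ (a \emph{relative ideal} of $H$) attach the monomial fractional ideal $I_E:=\bigoplus_{z\in E}Kt^z$; then $K[[H]]=I_H$, $\overline{K[[H]]}=K[[t]]=I_{\mathbb N}$, and for relative ideals $A,B$ one has $I_A:_{K((t))}I_B=I_{A-B}$, where $A-B:=\{z\in\mathbb Z:z+B\subseteq A\}$ is again a relative ideal. Unwinding the definition, $C=I_{E_C}$ with $E_C=\{F(H)-\alpha+h:\alpha\in PF(H),\ h\in H\}$, and the first task is to show
$$E_C=K(H):=\{z\in\mathbb Z:F(H)-z\notin H\}.$$
The inclusion $E_C\subseteq K(H)$ is immediate: if $z=F(H)-\alpha+h$, then $F(H)-z=\alpha-h$, and $\alpha-h\in H$ would give $\alpha=(\alpha-h)+h\in H$, contradicting $\alpha\in PF(H)$. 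For the reverse inclusion take $z\in K(H)$ and put $n:=F(H)-z\notin H$; the set $\{h\in H:n+h\notin H\}$ contains $0$ and is finite (if $n+h\notin H$ then $n+h\le F(H)$), so it has a largest element $h_0$, and maximality forces $n+h_0\in PF(H)$, whence $z=F(H)-(n+h_0)+h_0\in E_C$. Thus $C=I_{K(H)}$. Since $F(H)\notin H$ we get $H\subseteq K(H)$, and since $z<0$ would give $F(H)-z>F(H)$, hence $F(H)-z\in H$, we get $K(H)\subseteq\mathbb N$; therefore $K[[H]]\subseteq C\subseteq\overline{K[[H]]}$, which is \eqref{conditie}, and $C$ is a genuine fractional ideal because $K(H)$ is a relative ideal.

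It remains to prove that $I_{K(H)}$ is a canonical module. Since $K[[H]]$ is the completion of the localization at the irrelevant maximal ideal of the positively graded ring $K[t^h:h\in H]$, and forming the canonical module commutes with localization and completion, while a graded torsion-free rank-one module over a one-dimensional graded domain is, up to graded isomorphism and shift, of the form $I_E$, the canonical module of $K[[H]]$ is (the completion of) a monomial fractional ideal. Hence it suffices to verify the characterization of canonical ideals of a one-dimensional Cohen--Macaulay local ring on monomial data, i.e.\ that $C:_{K((t))}\bigl(C:_{K((t))}I_E\bigr)=I_E$ for every relative ideal $E$; by the colon formula above this is the purely combinatorial identity
$$K(H)-\bigl(K(H)-E\bigr)=E\qquad\text{for every relative ideal }E\text{ of }H.$$
I would deduce this from the auxiliary identity $K(H)-E=E^{\dagger}$, where $E^{\dagger}:=\{z\in\mathbb Z:F(H)-z\notin E\}$: if $z+E\subseteq K(H)$ but $F(H)-z\in E$, then with $e=F(H)-z$ we get $F(H)=z+e\in K(H)$, impossible because $F(H)-F(H)=0\in H$; conversely, if $F(H)-z\notin E$ but $F(H)-z-e=h\in H$ for some $e\in E$, then $F(H)-z=e+h\in E+H\subseteq E$, a contradiction. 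Applying $(\cdot)^{\dagger}$ twice gives $K(H)-(K(H)-E)=(E^{\dagger})^{\dagger}=\{z:F(H)-(F(H)-z)\in E\}=E$, as required.

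The combinatorics of the last step is short, so the real obstacle is the abstract input marrying it to the notion of canonical module — in particular that reflexivity need only be checked on the monomial fractional ideals, which is where the finite injective dimension of $C$ is hidden. A route that makes this transparent and is essentially self-contained is local duality: $K[[H]]$ is complete with coefficient field $K$, so $\omega_{K[[H]]}\cong\operatorname{Hom}_K\bigl(H^1_{\mm}(K[[H]]),K\bigr)$ as graded modules, and from the exact sequence $0\to K[[t]]/K[[H]]\to H^1_{\mm}(K[[H]])\to H^1_{\mm}(K[[t]])\to0$ together with $H^1_{\mm}(K[[t]])\cong K((t))/K[[t]]$ and the finite length of $K[[t]]/K[[H]]$ one identifies $H^1_{\mm}(K[[H]])$ with $\bigoplus_{n\in\mathbb Z\setminus H}Kt^{n}$ as a graded $K$-vector space; dualizing and shifting the grading by $F(H)$ then yields $\omega_{K[[H]]}\cong I_{\{F(H)-n\,:\,n\notin H\}}=I_{K(H)}=C$. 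The only care this route needs is the bookkeeping of the degree shift, since the canonical module is only determined up to isomorphism.
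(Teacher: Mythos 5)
The paper does not prove this proposition at all: it is quoted with the citation \cite[Exercise 21.11]{b} and used as a black box, so any argument you give is necessarily ``a different route.'' Your combinatorial core is correct and complete: the identification of the exponent set of $C$ with $K(H)=\{z\in\mathbb Z:F(H)-z\notin H\}$ (both inclusions, including the nice maximal-$h_0$ argument producing a pseudo-Frobenius number), the verification of \eqref{conditie}, and the identity $K(H)-(K(H)-E)=E$ via the involution $E\mapsto E^{\dagger}$ are all sound. (For \eqref{conditie} alone there is an even shorter route: $F(H)\in PF(H)$ gives $1\in C$, and $F(H)=\max PF(H)$ gives $F(H)-\alpha\ge 0$ for all generators.)

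The one genuine soft spot is the reduction in your first route: from ``the canonical module is itself a monomial fractional ideal'' you conclude ``hence it suffices to verify $C:(C:I_E)=I_E$ for monomial $E$.'' That implication is not justified. The Herzog--Kunz characterization of canonical ideals requires the double-colon identity against \emph{all} fractional ideals, and knowing that some canonical ideal happens to be monomial does not by itself let you test a candidate only against monomial ideals; you would need an additional argument (e.g.\ that every fractional ideal is, up to the relevant colon operations, controlled by monomial ones, or a length/duality criterion). You evidently sense this, since you call it ``the real obstacle'' and then switch to graded local duality. That second route is the standard proof and does close the argument, with two pieces of bookkeeping you should make explicit: the dual of $H^1_{\mm}$ must be the \emph{graded} (Matlis) dual $\bigoplus_d\operatorname{Hom}_K\bigl((H^1_{\mm})_{-d},K\bigr)$ rather than the full $K$-linear dual (which would be an uncountable product), and one must check that the resulting $K[[H]]$-module structure on $\bigoplus_{n\notin H}Kt^{-n}$ agrees with the fractional-ideal structure --- routine here because every graded piece of $H^1_{\mm}(K[[H]])\cong K[t,t^{-1}]/K[H]$ is one-dimensional. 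With route~2 carried through, your proof is correct; route~1 should either be deleted or supplemented with a justification of the reduction to monomial ideals.
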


In Theorem \ref{caract}, we collect characterizations of the far-flung Gorenstein property for numerical semigroup rings, as proved in \cite{f}.

\begin{theorem}
\label{caract}

(\cite[Theorem 2.5, Theorem 5.1 and Proposition 6.1]{f}) Let $H$ be a numerical semigroup. Then the following statements are equivalent:

(i) $K[[H]]$ is a far-flung Gorenstein ring;

(ii) $C^2=K[[t]]$;

(iii) $\{0,...,e(H)-1\}\subseteq \{2F(H)-\alpha-\beta : \alpha,\beta\in PF(H)\}$;

(iv) There exist $n_1<...<n_{r(H)}\in \mathbb N$ such that $C=(t^{n_1},...,t^{n_{r(H)}})K[[H]]$ and\\ $\{0,...,e(H)-1\}\subseteq \{n_i+n_j : 1\leq i\leq j\leq r(H)\}$.

If these hold, we may assume in (iv) that $n_1=0$ and $n_2=1$.

\end{theorem}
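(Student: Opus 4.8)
The plan is to establish the cycle of implications $(i)\Leftrightarrow(ii)$, $(ii)\Leftrightarrow(iii)$, $(iii)\Leftrightarrow(iv)$, and then the final normalization claim. For $(i)\Leftrightarrow(ii)$, I would invoke the cited general result: since $K[[H]]$ is a one-dimensional analytically unramified domain with $\overline{K[[H]]}=K[[t]]$, Proposition \ref{C} supplies a canonical module $C$ with $K[[H]]\subseteq C\subseteq\overline{K[[H]]}$, so condition \eqref{conditie} holds and \cite[Theorem 2.5]{f} applies verbatim: $K[[H]]$ is far-flung Gorenstein iff $C^2=\overline{K[[H]]}=K[[t]]$. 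For $(ii)\Leftrightarrow(iii)$, the key is to read off the monomials in $C^2$. By Proposition \ref{C}, $C=(t^{F(H)-\alpha}\mid\alpha\in PF(H))K[[H]]$, so $C$ is spanned over $K$ by $t^m$ for $m$ in the set $D:=\{F(H)-\alpha+h\mid\alpha\in PF(H),\ h\in H\}$, and hence $C^2$ is spanned by $t^m$ for $m\in D+D=\{2F(H)-\alpha-\beta+h\mid\alpha,\beta\in PF(H),\ h\in H\}$. Thus $C^2=K[[t]]$ iff $D+D=\mathbb N$, i.e. iff $D+D\supseteq\{0,\dots,e(H)-1\}$ (every integer $\geq e(H)$ is automatically reachable by adding an element of $H$, since $e(H)\in H$ and the complement of $H$ is bounded). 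Finally one checks $2F(H)-\alpha-\beta\geq 0$ always (as $\alpha,\beta\le F(H)$), so adding $h\in H$ only increases the value; therefore hitting $\{0,\dots,e(H)-1\}$ forces $h=0$ in those representations, giving exactly $(iii)$.

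For $(iii)\Leftrightarrow(iv)$, I would simply set $\{n_1,\dots,n_{r(H)}\}=\{F(H)-\alpha\mid\alpha\in PF(H)\}$, listed in increasing order; these are $r(H)$ distinct natural numbers since $PF(H)$ has $r(H)$ elements, and by Proposition \ref{C} this is precisely a generating set giving the canonical module $C$. Then $2F(H)-\alpha-\beta=n_i+n_j$, so $(iii)$ and $(iv)$ are literally the same statement. Conversely any generating set $\{t^{n_1},\dots,t^{n_{r(H)}}\}$ of $C$ must, up to the action of $H$, recover the $F(H)-\alpha$ as its minimal exponents, which is the content of \cite[Proposition 6.1]{f}; I would cite that rather than reprove it.

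For the normalization: since $C$ satisfies \eqref{conditie} we have $1=t^0\in C$, so one generator has exponent $0$; as $C$ is an ideal of $K[[H]]$ and not principal unless $r(H)=1$, we may take $n_1=0$. To force $n_2=1$, observe that if $K[[H]]$ is far-flung Gorenstein then by $(iv)$ the set $\{n_i+n_j\}$ covers $\{0,\dots,e(H)-1\}$, and in particular it must contain $1$; since $n_1=0$ and all $n_i\ge 0$, the only way to write $1=n_i+n_j$ is $\{n_i,n_j\}=\{0,1\}$, so $1\in\{n_1,\dots,n_{r(H)}\}$, and after reordering $n_2=1$. The main obstacle is the passage from an abstract generating set of $C$ to the canonical one expressed via pseudo-Frobenius numbers (the $(iv)\Rightarrow(iii)$ direction and well-definedness of the $n_i$); I expect to handle this by citing \cite[Proposition 6.1]{f} rather than unwinding it, so the remaining work is the elementary bookkeeping with exponents described above.
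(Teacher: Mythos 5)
Your argument is correct, and it matches the paper's treatment: the paper does not prove Theorem \ref{caract} at all but imports it verbatim from \cite{f} (Theorem 2.5, Theorem 5.1 and Proposition 6.1), which is exactly the source you lean on for the ring-theoretic core (i)$\Leftrightarrow$(ii) and for identifying the exponents $n_i$ with $F(H)-\alpha$. The elementary monomial bookkeeping you supply for (ii)$\Leftrightarrow$(iii)$\Leftrightarrow$(iv) and for the normalization $n_1=0$, $n_2=1$ is sound (the only cosmetic slip is the aside that ``$C$ is an ideal of $K[[H]]$'' --- it is a fractional ideal containing $1$, and $n_1=0$ already follows from $0$ being the minimal exponent occurring in $C$).
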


In \cite{f}, the solutions of the Rohrbach problem were used to obtain an upper bound for the multiplicity of a far-flung Gorenstein numerical semigroup ring.
\\

{\noindent \bf The Rohrbach problem.} (\cite{h}, \cite{j}) For a finite set of non-negative integers $A$, let $n(A)$ be the integer such that $0,1,...,n(A)-1\in A+A$ and $n(A)\notin A+A$. If $0\notin A$, we consider that $n(A)=-1$.

The Rohrbach problem consists of determining the integers
$$\bar{n}(r):=\max~\{n(A) : |A|=r\},$$
where $r\geq 1$.
The solution of the Rohrbach problem is known for $r\leq 25$ (\cite{j}), but no formula for $\bar{n}(r)$ is available so far. For example, $\bar{n}(1)=1$, $\bar{n}(2)=3$, $\bar{n}(3)=5$ and $\bar{n}(4)=9$.
\\

The following proposition provides a lower bound and an upper bound for the\\ multiplicity of a far-flung Gorenstein numerical semigroup ring in terms of its type.

\begin{proposition}
\label{margini}

(\cite[Corollary 2.23 and Corollary 3.2]{i}, \cite[Corollary 5.3]{f}) Let $H$ be a far-flung Gorenstein numerical semigroup ring of type $r\geq 2$. Then 
$$r+1\leq e(H)\leq \bar{n}(r),$$
where $\bar{n}(r)$ is the solution of the Rohrbach problem.

In addition, $e(H)=r+1$ if and only if $H$ is of minimal multiplicity.

\end{proposition}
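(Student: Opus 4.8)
The plan is to prove the upper bound, the lower bound, and the equality statement as three separate pieces. For the upper bound I would start from Theorem \ref{caract}(iv): there are integers $n_1<\cdots<n_r$ with $C=(t^{n_1},\dots,t^{n_r})K[[H]]$ and $\{0,\dots,e(H)-1\}\subseteq\{n_i+n_j:1\le i\le j\le r\}$, and we may take $n_1=0$. Setting $A=\{n_1,\dots,n_r\}$, we have $|A|=r$, $0\in A$, and $0,1,\dots,e(H)-1\in A+A$, so by the definition of $n(A)$ in the Rohrbach problem $n(A)\ge e(H)$; hence $e(H)\le n(A)\le\bar{n}(r)$.

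For the lower bound I would prove the sharper fact that $r(H)\le e(H)-1$ for every numerical semigroup $H\ne\mathbb N$ (and $r\ge2$ forces $H\ne\mathbb N$). Working with the Apéry set $\mathrm{Ap}(H,e)$ with respect to $e:=e(H)$, which has exactly $e$ elements, recall that $PF(H)=\{w-e : w \text{ is maximal in } \mathrm{Ap}(H,e) \text{ with respect to } \le_H\}$, where $a\le_H b$ means $b-a\in H$. Since $e\ge2$, the minimum $0$ of $\mathrm{Ap}(H,e)$ is not a maximal element, so at most $e-1$ elements of $\mathrm{Ap}(H,e)$ are maximal, i.e.\ $r(H)\le e-1$, giving $e(H)\ge r+1$.

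For the equivalence, one direction is immediate: if $H$ is of minimal multiplicity, Proposition \ref{PF} gives $PF(H)=\{a_2-a_1,\dots,a_v-a_1\}$, so $r(H)=v(H)-1=e(H)-1$, i.e.\ $e(H)=r+1$. Conversely, if $e(H)=r+1$ then $r(H)=e(H)-1$, so by the lower-bound step \emph{every} nonzero element of $\mathrm{Ap}(H,e)$ is $\le_H$-maximal, hence the nonzero Apéry elements are pairwise $\le_H$-incomparable. I would then check that such an element $w$ cannot be written as $a+b$ with $a,b\in H\setminus\{0\}$: splitting according to whether $a\in\mathrm{Ap}(H,e)$, one obtains either $a\le_H w$ with $a\ne w$ between two nonzero Apéry elements (contradicting maximality of $a$), or $a-e\in H$ and thus $w-e\in H$ (contradicting $w\in\mathrm{Ap}(H,e)$). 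So each nonzero element of $\mathrm{Ap}(H,e)$, together with $e$ itself, is a minimal generator, whence $v(H)\ge e(H)$; combined with the always-valid inequality $v(H)\le e(H)$ this yields $e(H)=v(H)$.

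The upper bound is essentially a dictionary translation through Theorem \ref{caract} and the definition of $\bar{n}$, hence routine. The step I expect to be the main obstacle is the nontrivial direction of the equivalence — showing that $r(H)=e(H)-1$ forces minimal multiplicity — which comes down to the Apéry-set bookkeeping sketched above and is precisely where the cited results of \cite{i} do their work.
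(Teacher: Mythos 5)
Your argument is correct. Note that the paper gives no proof of this proposition at all --- it is quoted from \cite[Corollaries 2.23 and 3.2]{i} and \cite[Corollary 5.3]{f} --- and your reconstruction follows exactly the standard route of those sources: the upper bound is the direct translation of Theorem \ref{caract}(iv) into the Rohrbach set-up (with $n_1=0$ guaranteeing $n(A)\ge e(H)$), and the lower bound together with the characterization of the case $e(H)=r+1$ is the usual Ap\'ery-set bookkeeping ($0$ is never $\le_H$-maximal, and if all $e(H)-1$ nonzero Ap\'ery elements are maximal then each is a minimal generator, forcing $v(H)=e(H)$). No gaps.
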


The classification of the far-flung Gorenstein numerical semigroup rings of type 3 is essentially given in \cite{f}. We add to it the case when $H$ is of minimal multiplicity.

\begin{theorem}
\label{completare}

(\cite[Theorem 6.4]{f}) Let $K[[H]]$ be a numerical semigroup ring of {type 3}. Then $K[[H]]$ is far-flung Gorenstein if and only if $H$ is in one of the following families of numerical semigroups:

1.~$H=\langle 4,4m+1,4m+2,4m+3\rangle$, with $m\geq 1$;

2.~$H=\langle 4,4m+3,4m+5,4m+6\rangle$, with $m\geq 1$;

3.~$H=\langle 5,5m+4,10m+6,10m+7\rangle$, with $m\geq 1$;

4.~$H=\langle 5,5m+1,10m+3,10m+4\rangle$, with $m\geq 1$;

5.~$H=\langle 5,5m+2,10m+1,10m+3\rangle$, with $m\geq 1$;

6.~$H=\langle 5,5m+3,10m+4,10m+7\rangle$, with $m\geq 1$.

\end{theorem}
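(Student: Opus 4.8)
The plan is to combine Theorem \ref{caract} with Proposition \ref{margini}. Since $r(H)=3$, Proposition \ref{margini} gives $4\le e(H)\le\bar n(3)=5$, with $e(H)=4$ exactly when $H$ is of minimal multiplicity. I will treat these two cases separately.

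\textbf{Case $e(H)=4$ (minimal multiplicity).} Here $v(H)=4$, so $H=\langle 4,a_2,a_3,a_4\rangle$ with $4<a_2<a_3<a_4$ and no $a_i$ congruent to $0\pmod 4$. By Proposition \ref{PF}, $PF(H)=\{a_2-4,a_3-4,a_4-4\}$ and $F(H)=a_4-4$. Writing $n_i=F(H)-\alpha$ for the three pseudo-Frobenius numbers $\alpha$, the generators of $C$ (up to the normalisation $n_1=0$, $n_2=1$ of Theorem \ref{caract}(iv)) are $0$, $a_4-a_3$, $a_4-a_2$. The far-flung Gorenstein condition is $\{0,1,2,3\}\subseteq\{n_i+n_j\}$. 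Since $n_1=0,n_2=1$ are forced, this reduces to controlling the two differences $a_4-a_3$ and $a_4-a_2$: one must equal $1$ (giving $n_2=1$), and the remaining sums $2n_2=2$, $n_2+n_3$, $2n_3$ must cover $\{2,3\}$. A short finite analysis of the possible values of $n_3\in\{2,3\}$ (or slightly larger, ruled out because then $3$ is not represented) pins down $(a_4-a_3,a_4-a_2)\in\{(1,2),(1,3),(2,3)\}$. Translating back, writing $a_2$ in each residue class mod $4$ and using that $a_2,a_3,a_4$ must actually be the minimal generators, produces families 1 and 2 (the ones with $e(H)=4$); one also checks the generating sets are minimal, i.e. that no $a_i$ lies in the semigroup generated by the others.

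\textbf{Case $e(H)=5$.} Now $H=\langle 5,\dots\rangle$ and $r(H)=3$, and $H$ is not of minimal multiplicity, so $v(H)\le 4$. The canonical module is $C=(t^{F-\alpha}\mid\alpha\in PF(H))$ with exactly three generators, normalised to $t^0,t^1,t^{n_3}$. Theorem \ref{caract}(iv) requires $\{0,1,2,3,4\}\subseteq\{0,1,2,n_3,n_3+1,2n_3\}$, which forces $n_3\in\{2,3,4\}$; each value of $n_3$ is a "shape" of $C$ that fixes the mutual distances among the three pseudo-Frobenius numbers. For each shape, the three pseudo-Frobenius numbers are $F$, $F-1$ (or the appropriate differences) and $F-n_3$; one then uses the Apéry set / Kunz-coordinate description of a numerical semigroup with $e(H)=5$ to list which such semigroups actually occur with exactly these pseudo-Frobenius numbers, and reads off the minimal generators. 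This is the step I expect to be the main obstacle: going from "the differences of the pseudo-Frobenius numbers are such-and-such" to an explicit minimal generating set of $H$ requires a genuine case analysis (there are a handful of sub-shapes for each $n_3$, some of which must be discarded because they force the type to drop below $3$ or force $e(H)\ne 5$). The surviving sub-cases, after rewriting the residue of the relevant generator mod $5$, yield families 3–6, together with the determination of their minimal generating sets.

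Finally I would verify disjointness of the six families (they are distinguished by $e(H)$ — families 1,2 have $e=4$, families 3–6 have $e=5$ — and within each multiplicity by the residues mod $e(H)$ of the generators or equivalently by the shape of $C$), and confirm that for $m\ge 1$ each listed generating set is indeed minimal. The bulk of the new work beyond \cite{f} is the $e(H)=4$ case, which is short; the $e(H)=5$ analysis is already in \cite[Theorem 6.4]{f}, so the proof largely amounts to assembling these pieces and checking the minimal-multiplicity families were not previously included.
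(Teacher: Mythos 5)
Your proposal follows essentially the same route as the paper: split by multiplicity using Proposition \ref{margini}, cite \cite[Theorem 6.4]{f} for the $e(H)=5$ families, and handle the minimal-multiplicity case by determining the possible shapes of $C$ from Theorem \ref{caract} and then recovering $PF(H)$ and the generators via Propositions \ref{C} and \ref{PF}. One slip to correct: the pair $(a_4-a_3,a_4-a_2)=(2,3)$ must be excluded (it gives $C=(1,t^2,t^3)$, whose sumset misses $1$), consistent with your own remark that one difference must equal $1$; so only $(1,2)$ and $(1,3)$ survive, matching families 1 and 2. Similarly, for $e(H)=5$ the value $n_3=4$ fails since $3\notin\{0,1,2,4,5,8\}$, leaving $n_3\in\{2,3\}$, though this case is anyway delegated to \cite{f}.
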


\begin{proof}
It is proved in \cite{f} that $K[[H]]$ is a far-flung Gorenstein ring of type 3 not of minimal multiplicity if and only if $H$ is in one of the families numbered 3 to 6 in the list.

Assume $K[[H]]$ is a far-flung Gorenstein ring of type 3 which is of minimal multiplicity. Then, by Proposition \ref{margini}, $e(H)=4$. By Theorem \ref{caract}, there exist $n\geq 2$ such that $C=(1,t,t^n)$ and $\{0,1,2,3\}\subseteq \{0,1,n\}+\{0,1,n\}$. Hence $C=(1,t,t^2)$ or $C=(1,t,t^3)$.

Assume $C=(1,t,t^2)$. By Proposition \ref{C}, $PF(H)=\{4n+1,4n+2,4n+3\}$, with $n\geq 0$. Then $H=\langle 4,4n+5,4n+6,4n+7\rangle$, with $n\geq 0$. Conversely, it is easy to check that, for such an $H$, $C=(1,t,t^2)$.

When $C=(1,t,t^3)$, by Proposition \ref{C}, $PF(H)=\{4n+3,4n+5,4n+6\}$, with $n\geq 0$. Then $H=\langle 4,4n+7,4n+9,4n+10\rangle$, with $n\geq 0$. Conversely, it is easy to check that, for such an $H$, $C=(1,t,t^3)$.
\end{proof}

\section{The far-flung Gorenstein numerical semigroup rings of type 4}
\label{type4}

In the following theorem, we classify the far-flung Gorenstein numerical semigroup rings of type 4. We obtain 40 disjoint such families in the classification, as presented in Table 1.

\begin{theorem}
\label{clasificare}

Let $K[[H]]$ be a numerical semigroup ring of {type 4}. Then $K[[H]]$ is far-flung Gorenstein if and only if $H$ is in one of the families of numerical semigroups listed in Table 1.

\end{theorem}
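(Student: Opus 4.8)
The plan is to imitate the strategy used for type~3 in Theorem~\ref{completare}, but now with $r(H)=4$. By Proposition~\ref{margini}, a far-flung Gorenstein numerical semigroup ring of type $4$ satisfies $5\le e(H)\le \bar n(4)=9$, so the argument splits into the five cases $e(H)\in\{5,6,7,8,9\}$. In each case I would use Theorem~\ref{caract}(iv): there is a canonical module $C=(t^{n_1},\dots,t^{n_4})K[[H]]$ with $0=n_1<n_2<n_3<n_4$ (and we may take $n_2=1$, since $e(H)\geq 2$ forces $1\in\{0,\ldots,e(H)-1\}$ to be a sum $n_i+n_j$, which is only possible if some $n_i=0$ and some $n_j=1$), such that $\{0,\dots,e(H)-1\}\subseteq\{n_i+n_j:1\le i\le j\le 4\}$. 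The sumset $\{n_i+n_j\}$ has at most $\binom{4}{2}+4=10$ elements, so for each fixed $e(H)$ there are only finitely many admissible tuples $(0,1,n_3,n_4)$; I would enumerate them (a short combinatorial search, the Rohrbach-type bookkeeping).

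Next, for each admissible tuple $(0,1,n_3,n_4)$ I would translate back to the pseudo-Frobenius numbers via Proposition~\ref{C}: since $C=(t^{F(H)-\alpha}\mid \alpha\in PF(H))$ and the exponents of a minimal generating set of $C$ are exactly $\{n_1,n_2,n_3,n_4\}$, we get $PF(H)=\{F(H)-n_4,\,F(H)-n_3,\,F(H)-n_2,\,F(H)-n_1\}=\{F-n_4,F-n_3,F-1,F\}$. Writing $F=F(H)$ in terms of a free parameter, this determines the "gap pattern" of $H$ near its top, i.e. the relative positions of the pseudo-Frobenius numbers, while $e(H)$ is already fixed. From $e(H)$ and $PF(H)$ one recovers $H$ itself: the Apéry set $\mathrm{Ap}(H,e(H))$ is built from $PF(H)$ (each $\alpha\in PF(H)$ contributes $\alpha+e(H)$, and the maximal elements of $\mathrm{Ap}(H,e)$ with respect to the divisibility order on $H$ are precisely $\{\alpha+e(H):\alpha\in PF(H)\}$), and from the Apéry set one reads off a generating set; when $e(H)$ is small this is an explicit and bounded computation, yielding a family $H=\langle e(H),\dots\rangle$ depending on one or two integer parameters (the parameter(s) being essentially $F(H)$, subject to congruence constraints mod $e(H)$).

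Finally, for the converse direction, for each candidate family produced I would verify directly that the listed $C$ is indeed a canonical module satisfying \eqref{conditie} — equivalently, check that the claimed generator exponents $\{0,1,n_3,n_4\}$ really are $\{F(H)-\alpha:\alpha\in PF(H)\}$ for the semigroup with those generators — and that $\{0,\dots,e(H)-1\}$ is covered by the sumset, so Theorem~\ref{caract} applies. Several admissible tuples will turn out to be spurious: they force congruence conditions on $F(H)$ that are incompatible, or they make some purported generator actually lie in the semigroup generated by the others, or the resulting $H$ has type $\ne 4$; these get discarded (the paper signals one such dead end around equation~\eqref{eq8}). The surviving families, after removing overlaps so that the list is disjoint, are exactly the $40$ entries of Table~1, with the minimal-generator caveats recorded in Table~2. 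The main obstacle I anticipate is purely organizational: the case $e(H)=9$ (and to a lesser extent $e(H)=8$) produces many admissible sumset patterns, and for each one the back-translation to $PF(H)$, the elimination of spurious subcases via congruence analysis, and the check that exactly four pseudo-Frobenius numbers survive, must all be done carefully; since the technique repeats, I would present one representative subcase per value of $e(H)$ in full and assert the rest are analogous, exactly as the introduction promises.
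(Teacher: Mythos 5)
Your overall strategy is the same as the paper's: bound $e(H)$ via Proposition \ref{margini}, enumerate the possible canonical modules $C=(1,t,t^{n_3},t^{n_4})$ via the sumset condition of Theorem \ref{caract}, translate to $PF(H)$ via Proposition \ref{C}, reconstruct $H$, and verify the converse directly. However, there are two concrete gaps. First, your claim that for each fixed $e(H)$ there are ``only finitely many admissible tuples $(0,1,n_3,n_4)$'' fails for $e(H)=5$: once $n_3\in\{2,3\}$, the set $\{0,1,n_3\}$ already covers $\{0,\dots,4\}$ in its sumset, so condition \eqref{eq2} places no upper bound on $n_4$. The exponent $n_4$ is constrained only modulo $5$, by the requirement that the four pseudo-Frobenius numbers $F, F-1, F-n_3, F-n_4$ occupy distinct nonzero residue classes mod $5$; this is exactly why families 1--4 of Table 1 carry a second parameter $x$ inside $C$ itself ($C=(1,t,t^2,t^{5x-2})$, etc.). An enumeration of finitely many tuples would miss all but finitely many of these semigroups.

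Second, and more seriously, your reconstruction step assumes that $e(H)$ and $PF(H)$ determine $H$ ``by an explicit and bounded computation'' with parameter ``essentially $F(H)$.'' This is true only when $H$ has minimal multiplicity, i.e.\ only for $e(H)=5$, where Proposition \ref{PF} applies. For $6\le e(H)\le 9$ the Ap\'ery set of $H$ with respect to $e(H)$ has $e(H)-1$ nonzero elements, of which only the four maximal ones are of the form $\alpha+e(H)$ with $\alpha\in PF(H)$; the remaining $e(H)-5$ elements are genuinely free unknowns (the paper's $m$; $m,x$; $m,x,y$; $m,x,y,z$), and the second parameter in families 13--40 indexes one of these, not $F(H)$. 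The substantive content of the paper's proof is pinning these unknowns down: each such Ap\'ery element $w$ yields a gap $w-e(H)$ that is not a pseudo-Frobenius number, hence must be dominated under $\leq_H$ by some element of $PF(H)$, and combining the resulting systems of inequalities with the conditions that the elements of $PF(H)$ are themselves gaps forces the equalities and parameter ranges of Table 1 (e.g.\ $n=2m$ in family 5, or $\frac{m-1}{2}\le x\le m$ in family 13) and exposes the inconsistent subcases such as \eqref{eq8}. Your sketch contains no mechanism for deriving these constraints, so as written it neither produces the correct parameter ranges nor rules out the spurious congruence patterns.
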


\begin{table}
\caption{The far-flung Gorenstein numerical semigroups of type 4}
{\tiny
\begin{tabular}{|c|c|c|c|}
\hline
No. & $H$ & $PF(H)$ & $C$\\ \hline
1 & \begin{multilinecol}{6cm} $\langle 5,5m+1,5m+5x-3,5m+5x-2,5m+5x-1\rangle$, $1\leq x\leq m$ \end{multilinecol} & $\{5m-4,5m+5x-8,5m+5x-7,5m+5x-6\}$ & $(1,t,t^2,t^{5x-2})$\\ \hline
2 & \begin{multilinecol}{6cm} $\langle 5,5m+4,5m+5x+1,5m+5x+2,5m+5x+3\rangle$, $1\leq x\leq m$ \end{multilinecol} & $\{5m-1,5m+5x-4,5m+5x-3,5m+5x-2\}$ & $(1,t,t^2,t^{5x-1})$\\ \hline
3 & \begin{multilinecol}{6cm} $\langle 5,5m+2,5m+5x+1,5m+5x+3,5m+5x+4\rangle$, $1\leq x\leq m-1$ \end{multilinecol} & $\{5m-3,5m+5x-4,5m+5x-2,5m+5x-1\}$ & $(1,t,t^3,t^{5x+2})$\\ \hline
4 & \begin{multilinecol}{6cm} $\langle 5,5m+3,5m+5x-1,5m+5x+1,5m+5x+2\rangle$, $1\leq x\leq m$ \end{multilinecol} & $\{5m-2,5m+5x-6,5m+5x-4,5m+5x-3\}$ & $(1,t,t^3,t^{5x-1})$\\ \hline
5 & $\langle 6,6m+5,12m+7,12m+8,12m+9\rangle$, $m\geq 1$ & $\{12m+1,12m+2,12m+3,12m+4\}$ & $(1,t,t^2,t^3)$\\ \hline
6 & $\langle 6,6m+1,12m+3,12m+4,12m+5\rangle$, $m\geq 1$ & $\{12m-4,12m-3,12m-2,12m-1\}$ & $(1,t,t^2,t^3)$\\ \hline
7 & $\langle 6,6m+2,12m+1,12m+3,12m+5\rangle$, $m\geq 1$ & $\{12m-5,12m-3,12m-2,12m-1\}$ & $(1,t,t^2,t^4)$\\ \hline
8 & $\langle 6,6m+4,12m+5,12m+7,12m+9\rangle$, $m\geq 1$ & $\{12m-1,12m+1,12m+2,12m+3\}$ & $(1,t,t^2,t^4)$\\ \hline
9 & $\langle 6,6m+5,12m+13,12m+14,12m+15\rangle$, $m\geq 1$ & $\{12m+4,12m+7,12m+8,12m+9\}$ & $(1,t,t^2,t^5)$\\ \hline
10 & $\langle 6,6m+1,12m-1,12m+3,12m+4\rangle$, $m\geq 1$ & $\{12m-7,12m-4,12m-3,12m-2\}$ & $(1,t,t^2,t^5)$\\ \hline
11 & $\langle 6,6m+4,12m+3,12m+5,12m+7\rangle$, $m\geq 1$ & $\{12m-3,12m-1,12m+1,12m+2\}$ & $(1,t,t^3,t^5)$\\ \hline
12 & $\langle 6,6m+2,12m-1,12m+1,12m+3\rangle$, $m\geq 1$ & $\{12m-7,12m-5,12m-3,12m-2\}$ & $(1,t,t^3,t^5)$\\ \hline
13 & \begin{multilinecol}{6cm} $\langle 7,7m+5,7x+6,7m+7x+8,7m+7x+9,7m+7x+10\rangle$, $m,x\geq 1$ and $\frac{m-1}{2}\leq x\leq m$ \end{multilinecol} & $\{7m+7x+1,7m+7x+2,7m+7x+3,7m+7x+4\}$ & $(1,t,t^2,t^3)$\\ \hline
14 & \begin{multilinecol}{6cm} $\langle 7,7m+1,7x+2,7m+7x+4,7m+7x+5,7m+7x+6\rangle$, $1\leq m\leq x\leq 2m$ \end{multilinecol} & $\{7m+7x-4,7m+7x-3,7m+7x-2,7m+7x-1\}$ & $(1,t,t^2,t^3)$\\ \hline
15 & $\langle 7,7m+1,7m+3,14m+5\rangle$, $m\geq 1$ & $\{14m-5,14m-3,14m-2,14m-1\}$ & $(1,t,t^2,t^4)$\\ \hline
16 & $\langle 7,7m+4,7m+5,14m+6\rangle$, $m\geq 1$ & $\{14m-1,14m+1,14m+2,14m+3\}$ & $(1,t,t^2,t^4)$\\ \hline
17 & $\langle 7,7m+2,7m+3,14m+1\rangle$, $m\geq 1$ & $\{14m-6,14m-3,14m-2,14m-1\}$ & $(1,t,t^2,t^5)$\\ \hline
18 & \begin{multilinecol}{6cm} $\langle 7,7m+4,7x+6,7m+7x+5,7m+7x+8,7m+7x+9\rangle$, $m,x\geq 1$ and $m-1\leq x\leq m$ \end{multilinecol} & $\{7m+7x-2,7m+7x+1,7m+7x+2,7m+7x+3\}$ & $(1,t,t^2,t^5)$\\ \hline
19 & \begin{multilinecol}{6cm} $\langle 7,7m+3,7x+6,7m+7x+8,7m+7x+11,7m+7x+12\rangle$, $1\leq m\leq x\leq 2m$ \end{multilinecol} & $\{7m+7x+1,7m+7x+2,7m+7x+4,7m+7x+5\}$ & $(1,t,t^3,t^4)$\\ \hline
20 & \begin{multilinecol}{6cm} $\langle 7,7m+1,7x+4,7m+7x+2,7m+7x+3,7m+7x+6\rangle$, $m,x\geq 1$ and $\frac{m-1}{2}\leq x\leq m$ \end{multilinecol} & $\{7m+7x-5,7m+7x-4,7m+7x-2,7m+7x-1\}$ & $(1,t,t^3,t^4)$\\ \hline
21 & $\langle 7,7m+3,7m+4,14m+5,14m+9\rangle$, $m\geq 1$ & $\{14m-2,14m-1,14m+1,14m+2\}$ & $(1,t,t^3,t^4)$\\ \hline
22 & \begin{multilinecol}{6cm} $\langle 7,7m+2,7x+4,7m+7x+1,7m+7x+3,7m+7x+5\rangle$, $m,x\geq 1$ and $m-1\leq x\leq 2m$ \end{multilinecol} & $\{7m+7x-6,7m+7x-4,7m+7x-2,7m+7x-1\}$ & $(1,t,t^3,t^5)$\\ \hline
23 & \begin{multilinecol}{6cm} $\langle 7,7m+3,7x+5,7m+7x+4,7m+7x+6,7m+7x+9\rangle$, $m,x\geq 1$ and $\frac{m-1}{2}\leq x\leq m$ \end{multilinecol} & $\{7m+7x-3,7m+7x-1,7m+7x+1,7m+7x+2\}$ & $(1,t,t^3,t^5)$\\ \hline
24 & \begin{multilinecol}{6cm} $\langle 8,8m+1,8m+3,8x+4,8m+8x+2,8m+8x+6\rangle$, $1\leq x\leq m$ \end{multilinecol} & $\{8m+8x-6,8m+8x-3,8m+8x-2,8m+8x-1\}$ & $(1,t,t^2,t^5)$\\ \hline
25 & \begin{multilinecol}{6cm} $\langle 8,8m+4,8x+5,8x+7,8m+8x+6,8m+8x+10\rangle$, $1\leq m\leq x$ \end{multilinecol} & $\{8m+8x-2,8m+8x+1,8m+8x+2,8m+8x+3\}$ & $(1,t,t^2,t^5)$\\ \hline
26 & \begin{multilinecol}{6cm} $\langle 8,8m+3,8x+6,8x+7,8m+8x+12,8m+8x+13\rangle$, $1\leq m\leq x\leq 2m$ \end{multilinecol} & $\{8m+8x+1,8m+8x+2,8m+8x+4,8m+8x+5\}$ & $(1,t,t^3,t^4)$\\ \hline
27 & \begin{multilinecol}{6cm} $\langle 8,8m+3,8m+6,8x+7,8m+8x+9,8m+8x+12\rangle$, $m,x\geq 1$ and $\frac{m-1}{2}\leq x\leq m-1$ \end{multilinecol} & $\{8m+8x+1,8m+8x+2,8m+8x+4,8m+8x+5\}$ & $(1,t,t^3,t^4)$\\ \hline
28 & \begin{multilinecol}{6cm} $\langle 8,8m-1,8m+1,8x+4,8m+8x+2,8m+8x+6\rangle$, $1\leq x\leq m-1$ \end{multilinecol} & $\{8m+8x-6,8m+8x-5,8m+8x-3,8m+8x-2\}$ & $(1,t,t^3,t^4)$\\ \hline
29 & \begin{multilinecol}{6cm} $\langle 8,8m+1,8x+2,8x+5,8m+8x+4,8m+8x+7\rangle$, $1\leq m\leq x\leq 2m$ \end{multilinecol} & $\{8m+8x-5,8m+8x-4,8m+8x-2,8m+8x-1\}$ & $(1,t,t^3,t^4)$\\ \hline
30 & \begin{multilinecol}{6cm} $\langle 8,8m+1,8m+2,8x+5,8m+8x+3,8m+8x+4\rangle$, $m,x\geq 1$ and $\frac{m-1}{2}\leq x\leq m-1$ \end{multilinecol} & $\{8m+8x-5,8m+8x-4,8m+8x-2,8m+8x-1\}$ & $(1,t,t^3,t^4)$\\ \hline
31 & \begin{multilinecol}{6cm} $\langle 8,8m+3,8x+4,8m+5,8m+8x+6,8m+8x+10\rangle$, $1\leq x\leq m$ \end{multilinecol} & $\{8m+8x-2,8m+8x-1,8m+8x+1,8m+8x+2\}$ & $(1,t,t^3,t^4)$\\ \hline
32 & \begin{multilinecol}{6cm} $\langle 9,9m+3,9m+6,9x+7,9x+8\rangle$, $1\leq m\leq x$ \end{multilinecol} & $\{9m+9x+1,9m+9x+2,9m+9x+4,9m+9x+5\}$ & $(1,t,t^3,t^4)$\\ \hline
33 & \begin{multilinecol}{6cm} $\langle 9,9m+3,9m+6,9x+7,18x-9m+8,18x+10,\\18x+13\rangle$, $m\geq 3$ and $\frac{2m-1}{3}\leq x\leq m-1$ \end{multilinecol} & $\{18x+1,18x+2,18x+4,18x+5\}$ & $(1,t,t^3,t^4)$\\ \hline
34 & \begin{multilinecol}{6cm} $\langle 9,9m+1,18m-9x-5,18m-9x-2,9x+8,18m+5\rangle$, $m,x\geq 1$ and $\frac{2m-2}{3}\leq x\leq m-1$ \end{multilinecol} & $\{18m-7,18m-6,18m-4,18m-3\}$ & $(1,t,t^3,t^4)$\\ \hline
35 & \begin{multilinecol}{6cm} $\langle 9,9m-2,9m-1,9m+1,9x+4,9m+9x+6\rangle$,\\ $m,x\geq 1$ and $\frac{m-1}{2}\leq x\leq m-1$ \end{multilinecol} & $\{9m+9x-7,9m+9x-6,9m+9x-4,9m+9x-3\}$ & $(1,t,t^3,t^4)$\\ \hline
36 & \begin{multilinecol}{6cm} $\langle 9,9m+1,18x-9m+11,18x-9m+14,9x+8,\\18x+13\rangle$, $1\leq m\leq x\leq \frac{3m-1}{2}$ \end{multilinecol} & $\{18x+3,18x+4,18x+6,18x+7\}$ & $(1,t,t^3,t^4)$\\ \hline
37 & \begin{multilinecol}{6cm} $\langle 9,9m-1,9m+1,9m+2,9x+5,9m+9x+3\rangle$,\\ $m,x\geq 1$ and $\frac{m-1}{2}\leq x\leq m-1$ \end{multilinecol} & $\{9m+9x-6,9m+9x-5,9m+9x-3,9m+9x-2\}$ & $(1,t,t^3,t^4)$\\ \hline
38 & \begin{multilinecol}{6cm} $\langle 9,9m+1,9x+2,18x-9m+3,18x-9m+6,18x+5,18x+8\rangle$, $1\leq m\leq x\leq 2m$ \end{multilinecol} & $\{18x-5,18x-4,18x-2,18x-1\}$ & $(1,t,t^3,t^4)$\\ \hline
39 & \begin{multilinecol}{6cm} $\langle 9,9x+3,9x+6,9m+1,9m+2\rangle$, $1\leq x\leq m-1$ \end{multilinecol} & $\{9m+9x-5,9m+9x-4,9m+9x-2,9m+9x-1\}$ & $(1,t,t^3,t^4)$\\ \hline
40 & \begin{multilinecol}{6cm} $\langle 9,9m+3,9x+4,9x+5,9m+6\rangle$, $1\leq m\leq x$ \end{multilinecol} & $\{9m+9x-2,9m+9x-1,9m+9x+1,9m+9x+2\}$ & $(1,t,t^3,t^4)$\\ \hline
\end{tabular}}
\end{table}

\begin{proof}
Assume $K[[H]]$ is a far-flung Gorenstein ring. By Proposition \ref{margini}, $5\leq e(H)\leq 9$. For each of this values, in view of Theorem \ref{caract}, there are several possibilities for the canonical module $C$. These give the differences between the pseudo-Frobenius numbers. For each $C$, arguing mod $e(H)$, there are $e(H)-4$ possible sets of pseudo-Frobenius numbers resulting in many sets of restrictions that need to be studied. Most of these sets of restrictions produce far-flung Gorenstein numerical semigroups, but not all of them (see the discussion below in the case $e(H)=8$). The technique is similar from one subcase to another. For brevity, we will give details to one subcase for each possible value of $e(H)$.
\\

I. $e(H)=5$

Since $e(H)=5=r+1$, by Proposition \ref{margini}, $H$ is of minimal multiplicity.

By Theorem \ref{caract}, there exist $2\leq n_3<n_4$ such that $C=(1,t,t^{n_3},t^{n_4})$ and \begin{equation}
\label{eq2}
\{0,1,2,3,4\}\subseteq \{0,1,n_3,n_4\}+\{0,1,n_3,n_4\}.
\end{equation}
Hence $2\leq n_3\leq 3$ and any $n_4>n_3$ satisfies \eqref{eq2}.

If $n_3=2$, then $C=(1,t,t^2,t^{n_4})$. Since $r(H)=4$, we get $n_4\equiv 3 ~(\mbox{mod 5})$ or $n_4\equiv 4 ~(\mbox{mod 5})$, hence $C=(1,t,t^2,t^{5x-2})$ or $C=(1,t,t^2,t^{5x-1})$, with $x\geq 1$.

If $n_3=3$, then $C=(1,t,t^3,t^{n_4})$. Since $r(H)=4$, we get $n_4\equiv 2 ~(\mbox{mod 5})$ or $n_4\equiv 4 ~(\mbox{mod 5})$, hence $C=(1,t,t^3,t^{5x+2})$ or $C=(1,t,t^3,t^{5x-1})$, with $x\geq 1$.

Assume $C=(1,t,t^2,t^{5x-2})$, with $x\geq 1$. By Proposition \ref{C}, $PF(H)=\{a,a+5x-4,a+5x-3,a+5x-2\}$, with $a\geq 1$. Since the elements in $PF(H)$ have distinct nonzero values modulo 5, we get $PF(H)=\{5n+1,5n+5x-3,5n+5x-2,5n+5x-1\}$, with $n\geq 0$. By Proposition \ref{PF}, $H=\langle 5,5n+6,5n+5x+2,5n+5x+3,5n+5x+4\rangle$. Setting $m=n+1$, we obtain $H=\langle 5,5m+1,5m+5x-3,5m+5x-2,5m+5x-1\rangle$, with $m\geq 1$.

Since $H$ is of minimal multiplicity, we get $v(H)=5$. In the numerical semigroup $\langle 5,5m+1\rangle$, the smallest nonzero elements congruent to 2,3,4 modulo 5 are $2(5m+1)$, $3(5m+1)$, respectively $4(5m+1)$. Then $5m+5x-3, 5m+5x-2, 5m+5x-1\notin \langle 5,5m+1\rangle$ if and only if $5m+5x-3\leq 2(5m+1)-5$, $5m+5x-2\leq 3(5m+1)-5$ and $5m+5x-1\leq 4(5m+1)-5$, which together are equivalent to $x\leq m$.

Thus $H=\langle 5,5m+1,5m+5x-3,5m+5x-2,5m+5x-1\rangle$, with $1\leq x\leq m$ is a numerical semigroup of minimal multiplicity. By Proposition \ref{PF}, $PF(H)=\{5m-4,5m+5x-8,5m+5x-7,5m+5x-6\}$, so $C=(1,t,t^2,t^{5x-2})$. This provides the first family in Table 1.

The three remaining possible modules $C$ produce the second, the third and the fourth family in Table 1, respectively.
\\

II. $e(H)=6$

By Theorem \ref{caract}, $C$ is one of the following $K[[H]]$-modules: $(1,t,t^2,t^3)$, $(1,t,t^2,t^4)$, $(1,t,t^2,t^5)$, $(1,t,t^3,t^4)$ or $(1,t,t^3,t^5)$.

Assume $C=(1,t,t^2,t^3)$. Then, since the elements of $PF(H)$ have distinct nonzero values modulo 6,
\begin{eqnarray}
\label{eq3} 
PF(H)&=&\{6n+1,6n+2,6n+3,6n+4\} \mbox{~or}\\
PF(H)&=&\{6n+2,6n+3,6n+4,6n+5\},\nonumber
\end{eqnarray}
with $n\geq 0$.

We will give full details to the subcase in equation \eqref{eq3}.

Considering the smallest nonzero element of $H$ in each congruence class modulo 6, there exists $m\geq 1$ such that $H=\langle 6,6n+7,6n+8,6n+9,6n+10,6m+5\rangle$ and $6m-1\notin H$.

The pseudo-Frobenius numbers of $H$ are the elements of $\mathbb Z\setminus H$ which are maximal with respect to the partial order $\leq_{H}$. Hence, since $6m-1\notin H\cup PF(H)$, we get

\[\left\{
\begin{array}{l}
6n+1-(6m-1)\in H \mbox{~~or}\\
6n+2-(6m-1)\in H \mbox{~~or}\\
6n+3-(6m-1)\in H \mbox{~~or}\\
6n+4-(6m-1)\in H,
\end{array}
\right.\]
so, arguing modulo 6, we infer that

\[\left\{
\begin{array}{l}
6n+1-(6m-1)\geq 6n+8 \mbox{~~or}\\
6n+2-(6m-1)\geq 6n+9 \mbox{~~or}\\
6n+3-(6m-1)\geq 6n+10 \mbox{~~or}\\
6n+4-(6m-1)\geq 6m+5,
\end{array}
\right.\]
which is equivalent to 
\begin{equation}
\label{eq4}
n\geq 2m.
\end{equation}

Since $6n+10=2(6m+5)+6(n-2m)$, we may remove $6n+10$ from the generators of $H$, so
$$H=\langle 6,6m+5,6n+7,6n+8,6n+9\rangle.$$

Let $p\geq 0$ be an integer such that $6p+4\in \langle 6,6m+5\rangle$, which means that there exist integers $a,b\geq 0$ such that $6p+4=6a+(6m+5)b$. Arguing modulo 6, we get $b\geq 2$, so $6p+4\geq 2(6m+5)$, equivalently $p\geq 2m+1$. Conversely, it is easy to check that for such $p$ the previous equation has solutions $a,b\geq 0$.

We know that $6n+4\notin H$, which is equivalent to $6n+4\notin \langle 6,6m+5\rangle$. So, by the discussion above, $n\leq 2m$. Using \eqref{eq4}, we get $n=2m$.

Thus $H=\langle 6,6m+5,12m+7,12m+8,12m+9\rangle$, with $m\geq 1$.

Conversely, it is easy to check that, for such an $H$, $PF(H)=\{12m+1,12m+2,12m+3,12m+4\}$.

This finishes the subcase \eqref{eq3} and provides the fifth family in Table 1.

When $e(H)=6$, there are 5 possible modules $C$ and each has two possible sets of pseudo-Frobenius numbers modulo 6. Out of these 10 possibilities, only 8 of them produce far-flung Gorenstein numerical semigroups, namely the families numbered 5 to 12 in Table 1. It turns out that the module $(1,t,t^3,t^4)$ does not produce any far-flung Gorenstein numerical semigroup of multiplicity 6.
\\

III. $e(H)=7$

By Theorem \ref{caract}, $C$ is one of the following $K[[H]]$-modules: $(1,t,t^2,t^3)$, $(1,t,t^2,t^4)$, $(1,t,t^2,t^5)$, $(1,t,t^3,t^4)$ or $(1,t,t^3,t^5)$.

Assume $C=(1,t,t^2,t^3)$. Then $PF(H)=\{7n+1,7n+2,7n+3,7n+4\}$ or $PF(H)=\{7n+2,7n+3,7n+4,7n+5\}$ or $PF(H)=\{7n+3,7n+4,7n+5,7n+6\}$, with $n\geq 0$.

We will give full details to the subcase 
\begin{equation}
\label{eq5}
PF(H)=\{7n+1,7n+2,7n+3,7n+4\}.
\end{equation}

Considering the smallest nonzero element of $H$ in each congruence class modulo 7, there exist $m,x\geq 1$ such that $H=\langle 7,7n+8,7n+9,7n+10,7n+11,7m+5,7x+6\rangle$ and $7m-2,7x-1\notin H$. Since $7m-2\notin H\cup PF(H)$, we get

\[\left\{
\begin{array}{l}
7n+1-(7m-2)\in H \mbox{~~or}\\
7n+2-(7m-2)\in H \mbox{~~or}\\
7n+3-(7m-2)\in H \mbox{~~or}\\
7n+4-(7m-2)\in H,
\end{array}
\right.\]
so, arguing modulo 7, we infer that

\[\left\{
\begin{array}{l}
7n+1-(7m-2)\geq 7n+10 \mbox{~~or}\\
7n+2-(7m-2)\geq 7n+11 \mbox{~~or}\\
7n+3-(7m-2)\geq 7m+5 \mbox{~~or}\\
7n+4-(7m-2)\geq 7x+6,
\end{array}
\right.\]
which is equivalent to
\begin{equation}
\label{eq6}
n\geq 2m \mbox{~~or~~} n\geq m+x.
\end{equation}

In a similar way, since $7x-1\notin H\cup PF(H)$, we get 
\begin{equation}
\label{eq7}
n\geq m+x.
\end{equation}

Combining \eqref{eq6} and \eqref{eq7}, we get $n\geq m+x$. Then we may write $7n+11=7(n-m-x)+(7m+5)+(7x+6)$, so 
$$H=\langle 7,7n+8,7n+9,7n+10,7m+5,7x+6\rangle.$$

Let $p\geq 0$ be an integer such that $7p+4\in \langle 7,7m+5,7x+6\rangle$, which means that there exist integers $a,b,c\geq 0$ such that $7p+4=7a+(7m+5)b+(7x+6)c$. We argue modulo 7. If $c=0$, then $b\geq 5$, so $7p+4\geq 5(7m+5)$. If $b=0$, then $c\geq 3$, so $7p+4\geq 3(7x+6)$. If $b,c\geq 1$, then $7p+4\geq (7m+5)+(7x+6)$. Hence $p\geq \mbox{min}~\{5m+3,3x+2,m+x+1\}$. Conversely, it is easy to check that for such $p$ the previous equation has solutions $a,b,c\geq 0$.

We know that $7n+4\notin H$, which is equivalent to $7n+4\notin \langle 7,7m+5,7x+6\rangle$. So, by the discussion above, $n\leq \mbox{min}~\{5m+2,3x+1,m+x\}\leq m+x$. Since $n\geq m+x$, we get $n=m+x$.

In a similar way, for the rest of the elements of $PF(H)$, we get $7n+1\notin H$ if and only if $n\leq \mbox{min}~\{3m+1,6x+4,2m+2x+2\}$, $7n+2\notin H$ if and only if $n\leq \mbox{min}~\{6m+3,5x+3,2m+x+1\}$ and $7n+3\notin H$ if and only if $n\leq \mbox{min}~\{2m,4x+2,m+2x+1\}$.

Substituting $n=m+x$ in the previous inequalities, we get

\[\left\{
\begin{array}{l}
m+x\leq 2m\\
m+x\leq 3x+1,
\end{array}
\right.\]
which is equivalent to $\frac{m-1}{2}\leq x\leq m$.

Thus $H=\langle 7,7m+5,7x+6,7m+7x+8,7m+7x+9,7m+7x+10\rangle$, with $m,x\geq 1$ and $\frac{m-1}{2}\leq x\leq m$.

Conversely, it is easy to check that $PF(H)=\{7m+7x+1,7m+7x+2,7m+7x+3,7m+7x+4\}$.

This finishes the subcase \eqref{eq5} and provides the thirteenth family in Table 1.

When $e(H)=7$, there are 5 possible modules $C$ and each has three possible sets of pseudo-Frobenius numbers modulo 7. Out of these 15 possibilities, only 11 of them produce far-flung Gorenstein numerical semigroups, namely the families 13 to 23 in Table 1.
\\

IV. $e(H)=8$

By Theorem \ref{caract}, $C$ is one of the following $K[[H]]$-modules: $(1,t,t^2,t^5)$ or $(1,t,t^3,t^4)$.

Assume $C=(1,t,t^2,t^5)$. Then $PF(H)=\{8n+1,8n+4,8n+5,8n+6\}$ or\\ $PF(H)=\{8n+2,8n+5,8n+6,8n+7\}$ or $PF(H)=\{8n+6,8n+9,8n+10,8n+11\}$ or $PF(H)=\{8n+7,8n+10,8n+11,8n+12\}$, with $n\geq 0$.

We will give some details to the subcase 
\begin{equation}
\label{eq8}
PF(H)=\{8n+1,8n+4,8n+5,8n+6\}.
\end{equation}

There exist $m,x,y\geq 1$ such that $H=\langle 8,8n+9,8n+12,8n+13,8n+14,8m+2,8x+3,8y+7\rangle$ and $8m-6,8x-5,8y-1\notin H$. Since $8m-6\notin H\cup PF(H)$, we get

\[\left\{
\begin{array}{l}
8n+1-(8m-6)\in H \mbox{~~or}\\
8n+4-(8m-6)\in H \mbox{~~or}\\
8n+5-(8m-6)\in H \mbox{~~or}\\
8n+6-(8m-6)\in H
\end{array}
\right.\]
so

\[\left\{
\begin{array}{l}
8n+1-(8m-6)\geq 8y+7 \mbox{~~or}\\
8n+4-(8m-6)\geq 8m+2 \mbox{~~or}\\
8n+5-(8m-6)\geq 8x+3 \mbox{~~or}\\
8n+6-(8m-6)\geq 8n+12
\end{array}
\right.\]
which is equivalent to
\begin{equation}
\label{eq9}
n\geq m+y \mbox{~~or~~} n\geq 2m-1 \mbox{~~or~~} n\geq m+x-1.
\end{equation}

In a similar way, since $8x-5\notin H\cup PF(H)$, we get
\begin{equation}
\label{eq10}
n\geq m+x-1 \mbox{~~or~~} n\geq 2x-1
\end{equation}
and, since $8y-1\notin H\cup PF(H)$, we get
\begin{equation}
\label{eq11}
n\geq m+y \mbox{~~or~~} n\geq 2y.
\end{equation}

Combining \eqref{eq9}, \eqref{eq10} and \eqref{eq11}, after eliminating redundant subcases, we are left with four possibilities:

i) $n\geq m+y$ and $n\geq m+x-1$

ii) $n\geq m+y$ and $n\geq 2x-1$

iii) $n\geq 2m-1$, $n\geq 2x-1$ and $n\geq 2y$

iv) $n\geq m+x-1$ and $n\geq 2y$.

We study i). We observe that $8n+9=8(n-m-y)+(8m+2)+(8y+7)$ and $8n+13=8(n-m-x+1)+(8m+2)+(8x+3)$, so $H=\langle 8,8n+12,8n+14,8m+2,8x+3,8y+7\rangle$.

Let $p\geq 0$ be an integer. One can check that $8p+1\in \langle 8,8m+2,8x+3,8y+7\rangle$ if and only if $p\geq \mbox{min}~ \{3x+1,7y+6,m+y+1,3m+x+1,x+2y+2\}$.

We know that $8n+1\notin H$, which is equivalent to $8n+1\notin \langle 8,8m+2,8x+3,8y+7\rangle$. So, by the discussion above,
\begin{equation}
\label{eq12}
n\leq \mbox{min}~ \{3x,7y+5,m+y,3m+x,x+2y+1\}\leq m+y.
\end{equation}
Since $n\geq m+y$, we get $n=m+y$.

In a similar way, $8n+5\notin H$ if and only if 
\begin{equation}
\label{eq13}
n\leq \mbox{min}~ \{7x+1,3y+1,m+x-1,3m+y,2x+y\}\leq m+x-1.
\end{equation}
Since $n\geq m+x-1$, we get $n=m+x-1$.

Since $n=m+y$ and $n=m+x-1$, we get $y=x-1$.

In a similar way, $8n+4\notin H$ if and only if 
\begin{equation}
\label{eq14}
n\leq \mbox{min}~ \{2m-1,4x,4y+2,2x+2y+1,m+x+y\}
\end{equation}
and $8n+6\notin H$ if and only if 
\begin{equation}
\label{eq15}
n\leq \mbox{min}~ \{3m-1,2x-1,2y,2m+x+y\}.
\end{equation}

Substituting $n=m+x-1$ and $y=x-1$ in the inequalities \eqref{eq12}, \eqref{eq13}, \eqref{eq14} and \eqref{eq15}, we get

\[\left\{
\begin{array}{l}
m+x-1\leq 2m-1\\
m+x-1\leq 2x-2,
\end{array}
\right.\]
a contradiction.

So there is no far-flung Gorenstein numerical semigrup satisfying condition i).

A similar discussion is made for the remaining subcases ii), iii) and iv).

We find no far-flung Gorenstein numerical semigroups satisfying \eqref{eq8}.

Overall, when $e(H)=8$, the two possible modules $C$ produce the families numbered 24 to 31 in Table 1.
\\

V. $e(H)=9$

By Theorem \ref{caract}, $C=(1,t,t^3,t^4)$.

Then $PF(H)=\{9n+1,9n+2,9n+4,9n+5\}$ or\\ $PF(H)=\{9n+2,9n+3,9n+5,9n+6\}$ or $PF(H)=\{9n+3,9n+4,9n+6,9n+7\}$ or $PF(H)=\{9n+4,9n+5,9n+7,9n+8\}$ or $PF(H)=\{9n+7,9n+8,9n+10,9n+11\}$, with $n\geq 0$.

We will give some details to the subcase 
\begin{equation}
\label{eq16}
PF(H)=\{9n+1,9n+2,9n+4,9n+5\}.
\end{equation}

There exist $m,x,y,z\geq 1$ such that $H=\langle 9,9n+10,9n+11,9n+13,9n+14,9m+3,9x+6,9y+7,9z+8\rangle$ and $9m-6,9x-3,9y-2,9z-1\notin H$. Since $9m-6\notin H\cup PF(H)$, we get

\[\left\{
\begin{array}{l}
9n+1-(9m-6)\in H \mbox{~~or}\\
9n+2-(9m-6)\in H \mbox{~~or}\\
9n+4-(9m-6)\in H \mbox{~~or}\\
9n+5-(9m-6)\in H
\end{array}
\right.\]
so

\[\left\{
\begin{array}{l}
9n+1-(9m-6)\geq 9y+7 \mbox{~~or}\\
9n+2-(9m-6)\geq 9z+8 \mbox{~~or}\\
9n+4-(9m-6)\geq 9n+10 \mbox{~~or}\\
9n+5-(9m-6)\geq 9n+11
\end{array}
\right.\]
which is equivalent to
\begin{equation}
\label{eq17}
n\geq m+y \mbox{~~or~~} n\geq m+z.
\end{equation}

In a similar way, since $9x-3\notin H\cup PF(H)$, we get
\begin{equation}
\label{eq18}
n\geq x+y \mbox{~~or~~} n\geq x+z.
\end{equation}

Since $9y-2\notin H\cup PF(H)$, we get
\begin{equation}
\label{eq19}
n\geq m+y \mbox{~~or~~} n\geq x+y \mbox{~~or~~} n\geq 2y
\end{equation}
and, since $9z-1\notin H\cup PF(H)$, we get
\begin{equation}
\label{eq20}
n\geq m+z \mbox{~~or~~} n\geq x+z.
\end{equation}

Combining \eqref{eq17}, \eqref{eq18}, \eqref{eq19} and \eqref{eq20}, after eliminating redundant subcases, we are left with three possibilities:

i) $n\geq m+y$ and $n\geq x+z$

ii) $n\geq m+z$ and $n\geq x+y$

iii) $n\geq m+z$, $n\geq x+z$ and $n\geq 2y$.

We study i). We observe that $9n+10=9(n-m-y)+(9m+3)+(9y+7)$ and $9n+14=9(n-x-z)+(9x+6)+(9z+8)$, so $H=\langle 9,9n+11,9n+13,9m+3,9x+6,9y+7,9z+8\rangle$.

Let $p\geq 0$ be an integer. One can check that $9p+1\in \langle 9,9m+3,9x+6,9y+7,9z+8\rangle$ if and only if $p\geq \mbox{min}~ \{4y+3,8z+7,m+y+1,m+2z+2,2x+y+2,2x+2z+3,y+6z+6,x+2y+z+3\}$.

We know that $9n+1\notin H$, which is equivalent to $9n+1\notin \langle 9,9m+3,9x+6,9y+7,9z+8\rangle$. So, by the discussion above, $9n+1\notin H$ if and only if
\begin{equation}
\label{eq21}
n\leq \mbox{min}~ \{4y+2,8z+6,m+y,m+2z+1,2x+y+1,2x+2z+2,y+6z+5,x+2y+z+2\}\leq m+y.
\end{equation}
Since $n\geq m+y$, we get $n=m+y$.

In a similar way, $9n+5\notin H$ if and only if 
\begin{equation}
\label{eq22}
n\leq \mbox{min}~ \{2y,4z+2,2m+z,x+z,y+2z+1\}\leq x+z.
\end{equation}
Since $n\geq x+z$, 
we get $n=x+z$.

Since $n=m+y$ and $n=x+z$, we get $z=m+y-x$.

In a similar way, $9n+2\notin H$ if and only if 
\begin{equation}
\label{eq23}
n\leq \mbox{min}~ \{8y+5,7z+5,2m+2y+1,m+z,x+2y+1,2x+z+1,3y+z+2,x+y+2z+2\}
\end{equation}
and $9n+4\notin H$ if and only if 
\begin{equation}
\label{eq24}
n\leq \mbox{min}~ \{7y+4,5z+3,2m+y,2m+2z+1,x+y,x+2z+1,2y+z+1\}.
\end{equation}

Substituting $n=m+y$ and $z=m+y-x$ in the inequalities \eqref{eq21}, \eqref{eq22}, \eqref{eq23} and \eqref{eq24}, we get

\[\left\{
\begin{array}{l}
m+y\leq 2y\\
m+y\leq 4m+4y-4x+2\\
m+y\leq 2m+y-x\\
m+y\leq x+y\\
m+y\leq m+3y-x+1\\
m+y\leq 2m+3y-2x+1,
\end{array}
\right.\]
which is equivalent to 

\[\left\{
\begin{array}{l}
x=m\\
y\geq m.
\end{array}
\right.\]
Hence $z=m+y-x=y$.

As $9n+11=(9m+3)+(9z+8)$ and $9n+13=(9x+6)+(9y+7)$, we obtain $H=\langle 9,9m+3,9m+6,9y+7,9y+8\rangle$, with $1\leq m\leq y$.

Conversely, it is easy to check that, for such an $H$, $PF(H)=\{9m+9y+1,9m+9y+2,9m+9y+4,9m+9y+5\}$.

A similar discussion is made for the remaining subcases.

Overall, when $e(H)=9$, the module $(1,t,t^3,t^4)$ produces the families 32 to 40 in Table 1.
\end{proof}

\medskip

The given generators of the numerical semigroups in Table 1 are usually minimal. It is easy to check this fact for the families of minimal multiplicity (no. 1-4), for those which depend on one parameter (no. 5-12, 15-17, 21) and also for families 32,39 and 40. For the rest, after a case by case analysis, we summarize in Table 2 for which $x$ and $m$ the generators are not minimal and also which generators can be discarded.

\begin{table}[h]
\caption{The families in Table 1 which are not minimally generated}
{\scriptsize
\begin{tabular}{|c|c|c|}
\hline
No. & Not minimally generated for & Non-minimal generators\\ \hline
\multirow{2}{*}{13} & $x=\frac{m-1}{2}$ & $7m+5$\\
\cline{2-3}
{} & $x=m$ & $7m+7x+10$\\ \hline
\multirow{2}{*}{14} & $x=m$ & $7m+7x+4$\\
\cline{2-3}
{} & $x=2m$ & $7x+2$\\ \hline
\multirow{2}{*}{18} & $x=m-1$ & $7m+7x+5$\\
\cline{2-3}
{} & $x=m$ & $7m+7x+8$\\ \hline
\multirow{2}{*}{19} & $x=m$ & $7m+7x+12$\\
\cline{2-3}
{} & $x=2m$ & $7x+6$\\ \hline
\multirow{2}{*}{20} & $x=\frac{m-1}{2}$ & $7m+1$\\
\cline{2-3}
{} & $x=m$ & $7m+7x+2$\\ \hline
\multirow{2}{*}{22} & $x=m-1$ & $7m+7x+1$\\
\cline{2-3}
{} & $x=2m$ & $7x+4$\\ \hline
\multirow{2}{*}{23} & $x=\frac{m-1}{2}$ & $7m+3$\\
\cline{2-3}
{} & $x=m$ & $7m+7x+6$\\ \hline
24 & $x=m$ & $8m+8x+2,8m+8x+6$\\ \hline
25 & $x=m$ & $8m+8x+10$\\ \hline
\multirow{2}{*}{26} & $x=m$ & $8m+8x+12,8m+8x+13$\\
\cline{2-3}
{} & $x=2m$ & $8x+6$\\ \hline
27 & $x=\frac{m-1}{2}$ & $8m+6$\\ \hline
28 & $x=m-1$ & $8m+8x+6$\\ \hline
\multirow{2}{*}{29} & $x=m$ & $8m+8x+4,8m+8x+7$\\
\cline{2-3}
{} & $x=2m$ & $8x+2$\\ \hline
30 & $x=\frac{m-1}{2}$ & $8m+2$\\ \hline
31 & $x=m$ & $8m+8x+6,8m+8x+10$\\ \hline
33 & $x=\frac{2m-1}{3}$ & $9m+6,9x+7$\\ \hline
\multirow{2}{*}{34} & $x=\frac{2m-2}{3}$ & $18m-9x-2$\\
\cline{2-3}
{} & $x=m-1$ & $18m+5$\\ \hline
\multirow{2}{*}{35} & $x=\frac{m-1}{2}$ & $9m-1$\\
\cline{2-3}
{} & $x=m-1$ & $9m+9x+6$\\ \hline
36 & $x=\frac{3m-1}{2}$ & $18x-9m+11$\\ \hline
37 & $x=\frac{m-1}{2}$ & $9m+1$\\ \hline
\multirow{2}{*}{38} & $x=m$ & $18x+5,18x+8$\\
\cline{2-3}
{} & $x=2m$ & $9x+2,18x-9m+3$\\ \hline
\end{tabular}}
\end{table}

\newpage

{\bf Acknowledgement.} We gratefully acknowledge the use of the numericalsgps package in GAP for our computations.

We would like to thank Dumitru Stamate and Mihai Cipu for telling us about far-flung Gorenstein rings and for suggesting us this problem.

The author was partly supported by a grant of the Ministry of Research, Innovation and Digitization, CNCS -UEFISCDI, project number PN-III-P1-1.1-TE-2021-1633, within PNCDI III.

This work is part of the author's PhD thesis at the University of Bucharest.

\medskip

{}


\begin{thebibliography}{}

\bibitem{Bruns-Herzog} W. Bruns, J. Herzog, Cohen-Macaulay Rings, Revised Ed., Cambridge Stud. Adv. Math., vol. 39, Cambridge University Press, Cambridge, 1998.

\bibitem{a} H. Dao, T. Kobayashi, R. Takahashi, {\it Trace ideals of canonical modules, annihilators of Ext modules, and classes of rings close to being Gorenstein}, J. Pure Appl. Algebra 225 (2021), Paper No. 106655.

\bibitem{b} D. Eisenbud, {\it Commutative Algebra with a View Toward Algebraic Geometry}, Springer, 1995.

\bibitem{c} \"{O}. Esentepe, Annihilation of cohomology over one dimensional almost Gorenstein rings, arXiv:2510.05720.

\bibitem{d} J. Herzog, T. Hibi, D.I. Stamate, {\it The trace of the canonical module}, Isr. J.
Math. 233 (2019) 133–165.

\bibitem{e} J. Herzog, T. Hibi, D.I. Stamate, {\it Canonical trace ideal and residue for numerical semigroup rings}, Semigroup Forum 103(2) (2021) 550–566.

\bibitem{Herzog-Kumashiro} J. Herzog, S. Kumashiro, {\it Upper bound on the colength of the trace of the canonical module in dimension one}, Arch. Math. 119 (2022), 237--246.

\bibitem{f} J. Herzog, S. Kumashiro, D. I. Stamate, {\it The tiny trace ideals of the canonical
modules in Cohen-Macaulay rings of dimension one}, J. Algebra 619 (2023) 626–642.

\bibitem{g} R. Isobe, S. Kumashiro, Characterization of almost Gorenstein rings in terms of the trace ideal, arXiv:2510.20170.

\bibitem{h} H. Rohrbach, {\it Ein Beitrag zur additiven Zahlentheorie}, Math. Z. 42 (1937)
1–30.

\bibitem{i} J.C. Rosales, P.A. Garc\'{i}a-S\'{a}nchez, {\it Numerical Semigroups}, Springer, 2009.

\bibitem{j} N.J.A. Sloane, {\it On-line encyclopedia of integer sequences}, https://oeis.
org/A123509.

\end{thebibliography}
\end{document}